\title{Contact monoids and Stein cobordisms}
\author[John A. Baldwin]{John A. Baldwin}
\address {Department of Mathematics, Princeton University\\ Princeton, NJ 08544-1000}
\email {baldwinj@math.princeton.edu}
\thanks{The author was partially supported by an NSF Postdoctoral Fellowship.}
\date{}
\newcommand\bH{\mathbf{H}}
\newcommand\hf{\widehat{HF}}
\newcommand\zzt{\mathbb{Z}_2}
\newcommand\sminus{-}
\newcommand\LL{\mathbb{L}}
\newcommand\LLb{\mathbb{L}_{\beta}}
\newtheorem{theorem}{Theorem}[section]
\newtheorem{corollary}[theorem]{Corollary}
\newtheorem{proposition}[theorem]{Proposition}
\newtheorem{question}[theorem]{Question}
\theoremstyle{definition}
\newtheorem{remark}[theorem]{Remark}
\begin{document}
\begin{abstract} 
Suppose $S$ is a compact surface with boundary, and let $\phi$ be a diffeomorphism of $S$ which fixes the boundary pointwise. We denote by $(M_{S,\phi},\xi_{S,\phi})$ the contact 3-manifold compatible with the open book $(S,\phi)$. In this article, we construct a Stein cobordism from the contact connected sum $(M_{S,h},\xi_{S,h})\,\#\,(M_{S,g},\xi_{S,g})$ to $(M_{S,hg},\xi_{S,hg})$, for any two boundary-fixing diffeomorphisms $h$ and $g$. This cobordism accounts for the comultiplication map on Heegaard Floer homology discovered in \cite{bald3}, and it illuminates several geometrically interesting monoids in the mapping class group $Mod^+(S,\partial S).$ We derive some consequences for the fillability of contact manifolds obtained as cyclic branched covers of transverse knots.\end{abstract}

\maketitle

\section{Introduction}
Let $M$ be a closed, oriented 3-manifold. In \cite{giroux2}, Giroux proves that there is a one-to-one correspondence between isotopy classes of contact structures on $M$ and open book decompositions of $M$ up to an equivalence called positive stabilization. Giroux's work places contact geometry on a more topological footing, and allows us to translate questions about tightness and fillability of contact structures into questions about diffeomorphisms of compact surfaces with boundary. In some cases, this translation is rather well-understood. For instance, Akbulut and Ozbagci \cite{AO} and, independently, Giroux \cite{giroux}, have shown that a contact manifold $(M, \xi)$ is Stein fillable if and only if $(M,\xi)$ is supported by \emph{some} open book $(S,\phi)$ for which $\phi$ is a product of right-handed Dehn twists. In a similar vein, Honda, Kazez and Mati{\'c}, generalizing a result of Goodman \cite{good}, prove that $(M,\xi)$ is tight if and only if \emph{every} open book $(S,\phi)$ supporting $(M,\xi)$ has \emph{right-veering} monodromy $\phi$ \cite{hkm1}.

Let $Mod^+(S,\partial S)$ denote the set of isotopy classes of orientation-preserving diffeomorphisms of $S$ which restrict to the identity on $\partial S$. The subset $Dehn^+(S,\partial S)\subset Mod^+(S,\partial S)$ whose elements are represented by compositions of right-handed Dehn twists is closed under composition and contains the isotopy class of the identity; in other words, it is a monoid. So, too, is the subset $Veer(S,\partial S)$ consisting of isotopy classes of right-veering diffeomorphisms of $S$. One is therefore tempted to ask whether the subsets of $Mod^+(S,\partial S)$ corresponding to Stein fillable, or, respectively, tight contact manifolds form monoids as well. In this article, we show that the first of these sets is a monoid and we point out two more geometrically significant ``contact monoids" in $Mod^+(S,\partial S)$, thereby strengthening our understanding of the link between contact structures and open books. 

Let $(M_{S,\phi}, \xi_{S,\phi})$ denote the contact manifold supported by the open book $(S,\phi)$. Most of the results in this paper stem from the following theorem.

\begin{theorem}
\label{thm:Stein}
$(M_{S,hg},\xi_{S,hg})$ is the result of contact $(-1)$-surgery on a Legendrian link $\mathbb{L}$ in the contact connected sum $(M_{S,h},\xi_{S,h})\,\#\,(M_{S,g},\xi_{S,g})$.
\end{theorem}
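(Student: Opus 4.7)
The strategy is to reduce the surgery statement to an identity in the mapping class group of a suitably stabilized page, using the standard correspondence between contact $(-1)$-surgery along curves on pages of open books and right-handed Dehn twists on the monodromy.

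First, I would recall that $(M_{S,h},\xi_{S,h})\,\#\,(M_{S,g},\xi_{S,g})$ is supported by the open book $(S\natural S,\,h\natural g)$, where $S\natural S$ is the boundary connected sum of two copies of $S$ and $h\natural g$ acts as $h$ on one summand, as $g$ on the other, and as the identity on the connecting 1-handle. Next I would invoke Gay's theorem (see also Ozbagci--Stipsicz): if $L$ is a Legendrian knot realized as a simple closed curve on a page of an open book $(\Sigma,\phi)$ supporting $(M,\xi)$, then contact $(-1)$-surgery on $L$ yields the contact manifold supported by $(\Sigma,\phi\circ\tau_L)$. It therefore suffices to exhibit simple closed curves $c_1,\dots,c_k$ lying on pages of a stabilization of $(S\natural S,h\natural g)$ such that composing the monodromy with $\tau_{c_1}\cdots\tau_{c_k}$ produces an open book equivalent, after further positive (de)stabilizations, to $(S,hg)$.

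Next I would stabilize both $(S,hg)$ and $(S\natural S,h\natural g)$ enough times to realize them as open books on a common surface $\Sigma$. On $\Sigma$ their monodromies take the forms $(hg)\cdot\tau_{E_1}\cdots\tau_{E_M}$ and $(h\natural g)\cdot\tau_{F_1}\cdots\tau_{F_{M'}}$, where the $E_i,F_j$ are the stabilization curves. The heart of the proof is to produce simple closed curves $c_1,\dots,c_k$ on $\Sigma$ satisfying
\begin{equation*}
(h\natural g)\cdot\tau_{F_1}\cdots\tau_{F_{M'}}\cdot\tau_{c_1}\cdots\tau_{c_k}\;=\;(hg)\cdot\tau_{E_1}\cdots\tau_{E_M}
\end{equation*}
in $Mod^+(\Sigma,\partial\Sigma)$. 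Geometrically, I would arrange $\Sigma$ so that it contains two embedded copies of $S$ (the summands of the connected sum), together with a third ``diagonal'' copy on which $hg$ acts, and read off the $c_i$ from the arcs interpolating between these copies. I would expect the outcome to match, and be motivated by, the Heegaard triple of \cite{bald3} defining the comultiplication map: the curves $c_i$ should be essentially the attaching curves of the 2-handles of the corresponding cobordism, now reinterpreted as lying on open book pages.

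With the identity in hand, each $c_i$ lies on a page of $(S\natural S,h\natural g)$, and a standard Legendrian realization (as in \cite{hkm1}) promotes it to a Legendrian knot $L_i$ in $(M_{S,h}\,\#\,M_{S,g},\xi_{S,h}\#\xi_{S,g})$. Iterated application of Gay's theorem then shows that contact $(-1)$-surgery on $\mathbb{L}=L_1\cup\cdots\cup L_k$ yields the contact manifold supported by $(\Sigma,(hg)\cdot\tau_{E_1}\cdots\tau_{E_M})$, a positive stabilization of $(S,hg)$, which supports $(M_{S,hg},\xi_{S,hg})$ by Giroux's theorem.

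\textbf{Main obstacle.} The principal difficulty is exhibiting the positive factorization in the displayed identity. A generic element of a mapping class group need not be a product of \emph{positive} Dehn twists, so the $c_i$ cannot be produced by a purely algebraic manipulation; instead they must be constructed geometrically from an explicit picture of $\Sigma$ as built out of the two summands of $S\natural S$ and the diagonal copy of $S$. Making this decomposition precise, choosing the stabilization curves $E_i,F_j$ compatibly, and verifying that the resulting Dehn twists combine on the nose is where I expect the bulk of the technical work to lie.
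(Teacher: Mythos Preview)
Your proposal takes a genuinely different route from the paper, and the step you flag as the ``main obstacle'' is in fact a real gap: you never produce the curves $c_i$ or verify the mapping class group identity, so the argument is at present only a strategy whose hardest part remains undone. There is no general mechanism that guarantees a positive factorization of $(h\natural g)^{-1}(hg)$ (even after stabilization), so without an explicit construction the proposal does not constitute a proof.

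The paper sidesteps this difficulty entirely by going in the \emph{opposite} direction and then reversing. Using Torisu's convex Heegaard description, the author views $(M_{S,hg},\xi_{S,hg})$ as two tight contact handlebodies glued along a convex surface $\Sigma$, with an $I$-invariant collar $\Sigma\times I$ inserted in between. On the middle level $\Sigma\times\{0\}$ sit the natural handlebody attaching curves $\beta_i$; each meets the dividing set exactly twice, so after Legendrian realization contact $(+1)$-surgery on $\mathbb{L}_\beta=\cup_i\beta_i\times\{0\}$ coincides with $\Sigma$-framed $0$-surgery. A short convex-surface argument, together with a tightness check via the Heegaard Floer contact invariant, shows this $(+1)$-surgery produces $(M_{S,h},\xi_{S,h})\#(M_{S,g},\xi_{S,g})$. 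Finally, Ding--Geiges' cancellation of contact $(\pm1)$-surgeries furnishes the desired Legendrian link $\mathbb{L}$ in the connected sum on which contact $(-1)$-surgery returns $(M_{S,hg},\xi_{S,hg})$. This approach never requires a positive factorization; the price is some input from Heegaard Floer homology and that the link $\mathbb{L}$ is only implicitly described via Ding--Geiges. Your open-book approach, if carried through, would give an explicit $\mathbb{L}$ on pages and is closer in spirit to the independent construction of Baker, Etnyre and Van Horn-Morris mentioned in the introduction; but as written the key step is missing.
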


According to Eliashberg, the cobordism from $(M_{S,h},\xi_{S,h})\,\#\,(M_{S,g},\xi_{S,g})$ to $(M_{S,hg},\xi_{S,hg})$ obtained by attaching contact $(-1)$-framed 2-handles to the components of the link $\mathbb{L}$ in Theorem \ref{thm:Stein} carries a natural Stein structure which is compatible with the contact structures on either end \cite{yasha3}. So, we may alternatively think of Theorem \ref{thm:Stein} as the statement that there exists a Stein 2-handle cobordism from the contact connected sum $(M_{S,h},\xi_{S,h})\,\#\,(M_{S,g},\xi_{S,g})$ to $(M_{S,hg},\xi_{S,hg})$. The theorem below is a straightforward consequence of Theorem \ref{thm:Stein}.

\begin{theorem}
\label{thm:propp}
Suppose that $\bH$ is some property of contact manifolds which is preserved under the operations of contact connected sum and contact $(-1)$-surgery on Legendrian links. Then the set of $\phi\in Mod^+(S,\partial S)$ for which $(M_{S,\phi},\xi_{S,\phi})$ satisfies the property $\bH$ is closed under composition.
\end{theorem}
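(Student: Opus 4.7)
The plan is to derive Theorem \ref{thm:propp} as a direct formal corollary of Theorem \ref{thm:Stein}. Fix a property $\bH$ satisfying the two closure hypotheses, and suppose $h, g \in Mod^+(S,\partial S)$ are such that both $(M_{S,h},\xi_{S,h})$ and $(M_{S,g},\xi_{S,g})$ satisfy $\bH$. The goal is to show that $(M_{S,hg},\xi_{S,hg})$ also satisfies $\bH$, which will establish that the designated subset of $Mod^+(S,\partial S)$ is closed under composition.

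I would proceed in two steps. First, by the assumption that $\bH$ is preserved under contact connected sum, the contact manifold $(M_{S,h},\xi_{S,h})\,\#\,(M_{S,g},\xi_{S,g})$ satisfies $\bH$. Second, I invoke Theorem \ref{thm:Stein} to exhibit $(M_{S,hg},\xi_{S,hg})$ as the result of contact $(-1)$-surgery on a Legendrian link $\mathbb{L}$ in this connected sum; the hypothesis that $\bH$ is preserved under contact $(-1)$-surgery on Legendrian links then forces $(M_{S,hg},\xi_{S,hg})$ to satisfy $\bH$ as well.

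There is essentially no obstacle here; the argument is nothing more than a chain of the two closure hypotheses applied in sequence. The one point worth noting is that Theorem \ref{thm:Stein} produces a Legendrian \emph{link} rather than a single Legendrian knot, so the hypothesis on contact $(-1)$-surgery must be understood as covering arbitrary Legendrian links. This is either given directly by the statement or obtained by iterating the single-component version of the hypothesis component by component through intermediate contact manifolds. With that understood, the conclusion follows immediately.
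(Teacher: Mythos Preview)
Your proof is correct and follows exactly the same approach as the paper: assume $h$ and $g$ give contact manifolds with property $\bH$, use closure under contact connected sum, then apply Theorem~\ref{thm:Stein} together with closure under contact $(-1)$-surgery. The paper states this in two sentences immediately following the theorem; your version is the same argument with a bit more detail.
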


To see this, suppose that $(M_{S,h},\xi_{S,h})$ and $(M_{S,g},\xi_{S,g})$ satisfy property $\bH$. Then so does the contact connected sum $(M_{S,h},\xi_{S,h})\,\#\,(M_{S,g},\xi_{S,g})$. Theorem \ref{thm:Stein} therefore implies that $(M_{S,hg},\xi_{S,hg})$ satisfies property $\bH$ as well. 

Examples of such properties $\bH$ are Stein fillability, as well as strong and weak symplectic fillability \cite{yasha3,weinstein,EH3}. In particular, let $Stein(S,\partial S)$, $Strong(S,\partial S)$ and $Weak(S,\partial S)$ denote the subsets of $Mod^+(S,\partial S)$ whose elements give rise to open books compatible with Stein fillable, strongly symplectically fillable and weakly symplectically fillable contact manifolds, respectively. Then Theorem \ref{thm:propp} implies the following.

\begin{theorem}
\label{thm:monoid}
$Stein(S,\partial S)$, $Strong(S,\partial S)$ and $Weak(S,\partial S)$ are monoids.
\end{theorem}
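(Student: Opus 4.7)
The plan is to derive this theorem as three parallel applications of Theorem~\ref{thm:propp}. For each of the three fillability notions -- Stein, strong symplectic, and weak symplectic -- I verify the two hypotheses of that theorem: closure under contact connected sum, and closure under contact $(-1)$-surgery on Legendrian links. I would then separately check that the identity $\mathrm{id} \in Mod^+(S,\partial S)$ lies in each set, so that the three sets are monoids rather than merely sub-semigroups.

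For Stein fillability, if $W_1$, $W_2$ are Stein fillings of $(M_1,\xi_1)$ and $(M_2,\xi_2)$, the boundary connected sum $W_1 \natural W_2$ carries a Stein structure with contact boundary $(M_1,\xi_1)\,\#\,(M_2,\xi_2)$, giving closure under connected sum. Closure under contact $(-1)$-surgery is Eliashberg's theorem that a Stein $2$-handle may be attached along a Legendrian knot at framing $\mathrm{tb}-1$ -- precisely the result already invoked in the remark following Theorem~\ref{thm:Stein}. The argument for strong symplectic fillability is formally identical, with Weinstein $2$-handles in place of Stein $2$-handles. The weak symplectic case requires more care: connected sum is still handled by boundary connected sum, while closure under contact $(-1)$-surgery invokes Eliashberg's result that a weak filling can be modified near its boundary so as to admit extension over a Weinstein $2$-handle attached along a Legendrian knot.

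Finally, for the identity element, the open book $(S,\mathrm{id})$ compatibly supports the standard tight contact structure on $\#^{k}(S^1\times S^2)$ with $k = 2g(S)+|\partial S|-1$, and this contact manifold is Stein filled by $\natural^{k}(S^1\times B^3)$, and hence is also strongly and weakly fillable. Thus $\mathrm{id}$ lies in each of $Stein(S,\partial S)$, $Strong(S,\partial S)$, and $Weak(S,\partial S)$, and Theorem~\ref{thm:propp} gives closure of each of these sets under composition. The only real obstacle in the argument is the weak-fillability case of Legendrian surgery, where one must deform the symplectic form near the attaching circle in order to perform the handle attachment; every other verification is either classical or a trivial identification.
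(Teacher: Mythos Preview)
Your proof is correct and follows essentially the same approach as the paper: both derive the theorem directly from Theorem~\ref{thm:propp} by noting that Stein, strong, and weak fillability are each preserved under contact connected sum and contact $(-1)$-surgery (the paper simply cites \cite{yasha3,weinstein,EH3} for these facts, while you spell out the constructions). Your explicit verification that $\mathrm{id}$ lies in each set is a welcome addition---the paper leaves this implicit, though it does note elsewhere that $\xi_{S,\mathrm{id}}$ is Stein fillable.
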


The contact invariant in Heegaard Floer homology is well-behaved with respect to the maps induced by Stein cobordisms \cite{osz1}. Specifically, if $(M',\xi')$ is obtained from $(M,\xi)$ by performing contact $(-1)$-surgery on a Legendrian knot, and $W$ is the corresponding 2-handle cobordism from $M$ to $M'$, then the map $$F_{-W}:\hf(-M')\rightarrow \hf(-M)$$ sends $c(\xi')$ to $c(\xi)$. In addition, for two contact manifolds $(M_1,\xi_1)$ and $(M_2,\xi_2)$, the contact invariant $c(\xi_1 \# \xi_2)$ is identified with $c(\xi_1) \otimes c(\xi_2)$ via the isomorphism $$\hf(-(M_1 \# M_2)) \cong \hf(-M_1)\otimes_{\zzt} \hf(-M_2).$$ Coupled with these facts, Theorem \ref{thm:Stein} immediately reproduces the following result from \cite{bald3}.

\begin{corollary}[{\rm \cite[Theorem 1.4]{bald3}}]
\label{cor:comult}
There exists a ``comultiplication" map $$\hf(-M_{S,hg}) \rightarrow \hf (-M_{S,h})\otimes_{\zzt} \hf(-M_{S,g})$$ which sends $c(\xi_{S,hg})$ to $c(\xi_{S,h})\otimes c(\xi_{S,g})$.
\end{corollary}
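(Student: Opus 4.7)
The plan is to assemble the corollary directly from Theorem \ref{thm:Stein} together with the two naturality properties of the contact invariant recalled immediately before the statement.

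By Theorem \ref{thm:Stein} there is a Legendrian link $\mathbb{L}=L_1\cup\cdots\cup L_n$ in the contact connected sum $(M_{S,h},\xi_{S,h})\,\#\,(M_{S,g},\xi_{S,g})$ whose contact $(-1)$-surgery produces $(M_{S,hg},\xi_{S,hg})$. Attaching contact $(-1)$-framed Stein 2-handles along the $L_i$ one component at a time, I obtain a sequence of single-handle Stein cobordisms $W_1,\ldots,W_n$ whose composition is a Stein cobordism
$$W:(M_{S,h},\xi_{S,h})\,\#\,(M_{S,g},\xi_{S,g})\longrightarrow (M_{S,hg},\xi_{S,hg}).$$

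Next I apply the Ozsv\'ath--Szab\'o naturality result from \cite{osz1} once per 2-handle: for each $i$, the map induced by $-W_i$ carries the contact invariant of its outgoing end to that of its incoming end. Composing these and using functoriality of the 4-manifold cobordism maps, I obtain
$$F_{-W}:\hf(-M_{S,hg})\longrightarrow \hf(-(M_{S,h}\,\#\,M_{S,g}))$$
sending $c(\xi_{S,hg})$ to $c(\xi_{S,h}\,\#\,\xi_{S,g})$. Finally I compose with the K\"unneth isomorphism
$$\hf(-(M_{S,h}\,\#\,M_{S,g}))\,\cong\,\hf(-M_{S,h})\otimes_{\zzt}\hf(-M_{S,g}),$$
under which $c(\xi_{S,h}\,\#\,\xi_{S,g})$ is identified with $c(\xi_{S,h})\otimes c(\xi_{S,g})$. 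The composite map is the desired comultiplication.

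I do not anticipate a substantive obstacle: once Theorem \ref{thm:Stein} is established, the corollary is a formal concatenation of the two cited properties of $c$. The only mild point to verify is that the quoted naturality statement, phrased for a single Legendrian knot, extends to the link $\mathbb{L}$; this is handled by the inductive argument above, combining single-handle applications via functoriality of the cobordism maps.
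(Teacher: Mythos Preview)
Your proposal is correct and follows exactly the argument the paper itself gives: the corollary is stated in the introduction as an immediate consequence of Theorem~\ref{thm:Stein} combined with the naturality of $c$ under contact $(-1)$-surgery and the K\"unneth identification $c(\xi_1\#\xi_2)=c(\xi_1)\otimes c(\xi_2)$. Your inductive handling of the link-versus-knot issue is a reasonable elaboration of a point the paper leaves implicit.
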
 

In particular, the set $OSz(S,\partial S)$ of $\phi \in Mod^+(S,\partial S)$ for which $c(\xi_{S,\phi}) \neq 0$ forms a monoid. This prompts the following question. (Below, $Tight(S,\partial S)$ is the subset of $Mod^+(S,\partial S)$ whose elements give rise to open books compatible with tight contact manifolds.)

\begin{question}
\label{ques:tight}
 Is $Tight(S,\partial S)$ a monoid?
\end{question}

Corollary \ref{cor:comult} does not provide an answer to Question \ref{ques:tight}, as there are tight contact structures whose contact invariants vanish \cite{lisstip, ghihonvh}. In fact, the question of whether tightness is preserved by contact $(-1)$-surgery on a Legendrian knot is open for closed contact manifolds.\footnote{Honda has found a tight contact structure on a genus 4 handlebody which becomes overtwisted after performing contact $(-1)$-surgery on a Legendrian knot \cite{honda3}, so the closedness condition is necessary when asking this question.} Interestingly, Theorem \ref{thm:propp} implies that this seemingly more basic question is actually equivalent to Question \ref{ques:tight}.

It is worth noting that, in general, these various subsets of $Mod^+(S,\partial S)$ are nested as follows (here, we have suppressed the notation for the surface $S$):
\begin{eqnarray}
\label{eqn:nest}
Dehn^+ \subsetneq Stein \subsetneq Strong \subsetneq Weak \subsetneq Tight \subsetneq Veer.
\end{eqnarray}  Moreover, 
\begin{eqnarray}
Strong\subsetneq OSz\subsetneq Tight,
\end{eqnarray} while $Weak$ and $OSz$ are incomparable in general \cite{ghiggini}. The strictness of the leftmost inclusion in (\ref{eqn:nest}) is a recent result due to Baker, Etnyre and Van Horn-Morris \cite{bev2} and, independently, Wand \cite{wand}, who construct open books for Stein fillable contact structures whose monodromies cannot be expressed as products of right-handed Dehn twists. The strictness of the rightmost inclusion is due to Honda, Kazez, and Mati{\'c}, who show any contact structure is supported by some open book with right-veering monodromy \cite{hkm1}. The other inclusions are more familiar and can be found in \cite{ghiggini3, yasha6, ghiggini4}.


The three contact monoids in Theorem \ref{thm:monoid} have been discovered independently by Baker, Etnyre and Van Horn-Morris \cite{bev2}, who construct their own Stein cobordism from $(M_{S,h},\xi_{S,h})\,\sqcup\,(M_{S,g},\xi_{S,g})$ to $(M_{S,hg},\xi_{S,hg})$. In fact, it was not until hearing of their result that I realized that the cobordism from $(M_{S,h},\xi_{S,h})\,\#\,(M_{S,g},\xi_{S,g})$ to $(M_{S,hg},\xi_{S,hg})$ defined implicitly in the last section of my paper with Plamenevskaya \cite{baldpla} carries a very natural Stein structure. 

The proof of Theorem \ref{thm:Stein} in this article makes use of standard tools in convex surface theory together with some small input from Heegaard Floer homology. In contrast, the approach of Baker, Etnyre and Van Horn-Morris involves an understanding of the contact structures associated to various cables of the binding of an open book. It would be interesting to determine whether our different approaches yield what are more or less the same Stein cobordisms in the end. 

The methods used in this paper can be applied in other settings as well. For example, suppose that $K$ is a transverse knot in the standard tight contact manifold $(S^3,\xi_{std})$. A well-known result of Bennequin asserts that $$sl(K) \leq -\chi(\Sigma),$$ where $sl(K)$ denotes the self-linking number of $K$ and $\Sigma$ is any Seifert surface for $K$ \cite{benn}. We say that $K$ \emph{realizes its Bennequin bound} if $sl(K) = -\chi(\Sigma)$ for some Seifert surface $\Sigma$. In \cite{hedd2}, Hedden proves that if $K$ is fibered and realizes its Bennequin bound then the open book associated to $K$ supports the contact manifold $(S^3,\xi_{std})$ (see \cite{bev} for a more general result). If $(S,\phi)$ denotes this open book, then $(S,\phi^n)$ supports the contact manifold obtained by taking the n-fold cyclic cover of $(S^3,\xi_{std})$ branched along $K$. Since $(S^3,\xi_{std})$ is Stein fillable, Theorem \ref{thm:monoid} implies the following.

\begin{corollary}
\label{cor:fbnc}
If $K$ is a fibered transverse knot in $(S^3,\xi_{std})$ which realizes its Bennequin bound, then the n-fold cyclic cover of $(S^3,\xi_{std})$ branched along $K$ is Stein fillable. \end{corollary}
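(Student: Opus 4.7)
The plan is to combine Hedden's theorem, cited in the excerpt, with the monoid property of $Stein(S,\partial S)$ established in Theorem \ref{thm:monoid}. By Hedden's result, the hypothesis that $K$ is fibered and realizes its Bennequin bound implies that the open book $(S,\phi)$ whose binding is $K$ and whose pages are the fibers of the fibration supports the standard tight contact structure; that is, $(M_{S,\phi},\xi_{S,\phi})=(S^3,\xi_{std})$. Since $(S^3,\xi_{std})$ is Stein fillable, for instance by the unit ball in $\mathbb{C}^2$, this gives $\phi\in Stein(S,\partial S)$.

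Next, I would identify the $n$-fold cyclic cover of $(S^3,\xi_{std})$ branched along $K$ with the contact manifold $(M_{S,\phi^n},\xi_{S,\phi^n})$. Topologically, since $K$ is the binding of $(S,\phi)$, cutting $M_{S,\phi}$ along a page, stacking $n$ copies, and regluing by $\phi^n$ exhibits the $n$-fold cyclic branched cover as the mapping torus presentation of the open book $(S,\phi^n)$. This identification is already used in the excerpt to motivate the corollary, and the compatibility with contact structures is a standard consequence of the naturality of the Giroux construction.

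With these two identifications in hand, the corollary is immediate. Since $\phi\in Stein(S,\partial S)$ and $Stein(S,\partial S)$ is closed under composition by Theorem \ref{thm:monoid}, an easy induction gives $\phi^n=\phi\cdot\phi\cdots\phi\in Stein(S,\partial S)$, so $(M_{S,\phi^n},\xi_{S,\phi^n})$ is Stein fillable.

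There is no real obstacle here; all the substantive work is carried out in Theorem \ref{thm:Stein} and packaged into Theorem \ref{thm:monoid}. The only point that needs to be checked carefully is the convention matching the $n$-th power of the open book monodromy with the $n$-fold cyclic branched cover, rather than some other covering; this is a routine unwinding of the mapping torus description of the complement of the binding.
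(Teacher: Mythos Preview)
Your proposal is correct and follows exactly the argument the paper sketches just before stating the corollary: Hedden's theorem gives $\phi\in Stein(S,\partial S)$, the $n$-fold cyclic branched cover is supported by $(S,\phi^n)$, and Theorem~\ref{thm:monoid} then forces $\phi^n\in Stein(S,\partial S)$.
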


\begin{remark} The statement in Corollary \ref{cor:fbnc} follows independently from the fact that if $K$ is a fibered transverse knot in $(S^3,\xi_{std})$ which realizes its Bennequin bound, then $K$ is strongly quasi-positive \cite{hedd3} and therefore bounds a complex curve $\Sigma$ in $B^4\subset \mathbb{C}^2$ \cite{rud}. Hence, the n-fold cyclic cover of $(B^4,i)$ branched along $\Sigma$ is a holomorphic filling of the n-fold cyclic cover of $(S^3,\xi_{std})$ branched along $K$. Lastly, a result of Bogomolov and de Oliveira tells us that this (indeed, any) holomorphic filling may be deformed into the blow-up of a Stein filling \cite{bdo}.\end{remark}

Using a slight variation of the main technique in this paper as suggested by Van Horn-Morris, combined with the ideas in \cite[Section 3]{bev}, we can prove a much stronger result which does not assume that $K$ is fibered.

\begin{theorem}
\label{thm:bnc}
If $K$ is a transverse knot in a Stein (resp. strongly/weakly symplectically) fillable contact manifold $(M,\xi)$ which realizes its Bennequin bound, then the n-fold cyclic cover of $(M,\xi)$ branched along $K$ is Stein (resp. strongly/weakly symplectically) fillable.
\end{theorem}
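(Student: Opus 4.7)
Following Van Horn-Morris's suggestion, the plan is to produce a Stein cobordism from the $n$-fold contact connected sum $\#^n (M,\xi)$ to the $n$-fold cyclic cover of $(M,\xi)$ branched along $K$, which I denote $(M_n,\bar\xi_n)$. Combined with the preservation of Stein, strongly symplectic, and weakly symplectic fillability under contact connected sum and contact $(-1)$-surgery (the same principle underlying Theorems \ref{thm:propp} and \ref{thm:monoid}), this immediately yields the conclusion. The starting ingredient is the non-fibered strengthening of Hedden's theorem \cite{hedd2} established in \cite[Section 3]{bev}: because $K$ realizes its Bennequin bound in $(M,\xi)$, some open book $(S,\phi)$ supporting $(M,\xi)$ has $K$ as a binding component. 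Pulling the local open-book model near $K$ back through the standard $n$-fold branched covering $S^1 \times D^2 \to S^1 \times D^2$ and combining with an appropriate unbranched cyclic cover elsewhere, one obtains an open book $(S_n, \phi_n)$ supporting $(M_n, \bar\xi_n)$.

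The core step is to construct a Legendrian link $\LL_n \subset \#^n (M,\xi)$ such that contact $(-1)$-surgery on $\LL_n$ produces the open book $(S_n,\phi_n)$. The link $\LL_n$ should be built in analogy with the link $\LL$ of Theorem \ref{thm:Stein}, but arranged with $n$-fold cyclic symmetry around the $n$ copies of $K$ in the connect-sum region; for $n=2$ the configuration most closely resembles the one in Theorem \ref{thm:Stein}, which likewise connects two copies of an open book across a connect-sum region. Eliashberg's theorem \cite{yasha3} would then upgrade the associated 2-handle cobordism to a Stein cobordism from $\#^n(M,\xi)$ to $(M_n,\bar\xi_n)$, at which point the fillability hypothesis on $(M,\xi)$ propagates first to $\#^n(M,\xi)$ and then to $(M_n,\bar\xi_n)$.

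The main obstacle is the construction of $\LL_n$ and the verification that contact surgery on it really yields the branched-cover open book $(S_n,\phi_n)$ rather than some other cover of $(M,\xi)$ with the same page. This requires adapting the convex-surface decomposition of the connect-sum region from \cite{bald3, baldpla} and the proof of Theorem \ref{thm:Stein} so that it respects the cyclic symmetry of the branched cover, then identifying the resulting monodromy via a Torisu-type argument. The Heegaard Floer input used to pin down the contact structure in Theorem \ref{thm:Stein} should transfer without essential change, since the relevant calculations are local near the distinguished binding component $K$.
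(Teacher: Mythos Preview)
Your proposal is a plan rather than a proof: you explicitly flag the construction of $\LL_n$ and the identification of the surgered manifold with $(M_n,\bar\xi_n)$ as ``the main obstacle,'' and you do not carry either out. In addition, the analogy with Theorem~\ref{thm:Stein} does not go through as stated. In Theorem~\ref{thm:Stein} the two open books share the \emph{same} page $S$, and the Stein cobordism lands in $(S,hg)$. If $K$ is merely one binding component among several (the non-fibered case), then the $n$-fold cover branched over $K$ has an open book whose page $S_n$ is genuinely different from $S$: the other binding components lift according to their linking number with $K$, enlarging the page. So you cannot simply iterate Theorem~\ref{thm:Stein}, and the ``Torisu-type argument'' you invoke would have to compare monodromies on different surfaces, which is exactly the step you leave open.

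The paper's argument avoids open books entirely in the non-fibered case. It uses \cite{bev} only for the convex-surface statement: the Seifert surface $S$ can be made convex with dividing set $\Gamma$ disjoint from $\partial S$. The product neighborhood $N=S\times[-1,1]$ then has convex boundary $\Sigma=DS$, and the double branched cover is built by gluing two copies of $M\setminus N$ along $\Sigma$. The new wrinkle relative to Theorem~\ref{thm:Stein} is that the attaching curves $\beta_i=\partial(b_i\times[-1,1])$ may hit the dividing set more than twice, because of the extra curves $\Gamma\cup\bar\Gamma$ on $\Sigma$. The paper fixes this by boring out standard neighborhoods of the Legendrian arcs $p_j\times[-1,1]$ (where $p_j\in b_i\cap\Gamma$), producing a smaller handlebody $N'$ whose attaching curves $\beta_{i,l}$ each meet the new dividing set exactly twice. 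Gluing two copies of $M\setminus N'$ gives $(\Sigma^2(M,K),\xi^2)\,\#\,\#^k(S^1\times S^2,\xi_0)$, and contact $(+1)$-surgery on the $\beta_{i,l}$ returns $(M,\xi)\,\#\,(M,\xi)$. Two rounds of Stein $2$-handle addition then take $(M,\xi)\,\#\,(M,\xi)$ to $(\Sigma^2(M,K),\xi^2)$, and one iterates for higher $n$. This is both more concrete and more elementary than your outline, and it sidesteps the page-change issue altogether.
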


It bears mentioning that Theorem \ref{thm:bnc} overlaps with a similar result in \cite[Corollary 1.3]{baldpla}. There, we show, using entirely different methods, that if $K$ is a transverse knot in $(S^3,\xi_{std})$ which belongs to a \emph{quasi-alternating} knot type (see \cite{osz12}) and satisfies the equality \begin{equation}\label{eqn:benn}sl(K)=\sigma(K)-1,\end{equation} then the double cover of $(S^3,\xi_{std})$ branched along $K$ is tight. In general, transverse knots in $(S^3,\xi_{std})$ satisfy the Bennequin-like bound \cite{pla3,sh}, \begin{equation}\label{eqn:sls}sl(K) \leq s(K)-1,\end{equation} where $s(K)$ is the concordance invariant defined by Rasmussen using Khovanov homology in \cite{ras3}. In his paper, Rasmussen shows that $$|s(K)|\leq2g_4(K),$$ and uses this fact to give a combinatorial proof of the Milnor conjecture (here, $g_4(K)$ denotes the 4-ball genus of $K$). In particular, the inequality above implies that \begin{equation}\label{eqn:sig}s(K)-1 \leq -\chi(\Sigma)\end{equation} for any Seifert surface $\Sigma$ which bounds $K$. 

Now, if $K$ is quasi-alternating, then $s(K) = \sigma(K)$ \cite{ras3,manozs}. If, in addition, $K$ realizes its Bennequin bound, then Equations (\ref{eqn:sls}) and (\ref{eqn:sig}) force the equality in Equation (\ref{eqn:benn}), and both the result in \cite[Corollary 1.3]{baldpla} described above and Theorem \ref{thm:bnc} imply that the double cover of $(S^3,\xi_{std})$ branched along $K$ is tight.

\subsection*{Acknowledgements} I thank John Etnyre and Jeremy Van Horn-Morris for very helpful correspondence. I am especially grateful for Jeremy's comments on an early draft of this paper, which lead to Theorem \ref{thm:bnc}.

\section{Proof of Theorem \ref{thm:Stein}}
First, we describe the contact 3-manifold, $(M_{S,\phi},\xi_{S,\phi})$, which is compatible with the open book $(S,\phi)$. Let $U$ be the handlebody defined by $U=S \times [-1,1]/\sim$, where $(x,t)\sim(x,0)$ for all $x \in \partial S$ (see Figure \ref{fig:handle}). The oriented curve $\Gamma = \partial S \times \{0\}$ divides $\Sigma = \partial U$ into two pieces, $\Sigma^+ = S\times \{1\}$ and $\Sigma^- =  -S \times \{-1\}$. We may therefore view $\phi$ as a boundary-fixing diffeomorphism of $\Sigma^+$. Note that $\partial \Sigma^+ = \Gamma = -\partial \Sigma^-$, and let $r:\Sigma \rightarrow \Sigma$ be the orientation-reversing involution defined by reflection across $\Gamma$. 

\begin{figure}[!htbp]
\labellist 
\hair 2pt 
\tiny
\pinlabel $\Gamma$ at 479 130
\pinlabel $\partial S$ at 104 61
\pinlabel $\Sigma^+$ at 461 184
\pinlabel $\Sigma^-$ at 461 76
\pinlabel $-1$ at 151 27
\pinlabel $1$ at 157 233
\small
\pinlabel $U$ at 405 4
\pinlabel $S\times [-1,1]$ at 233 4
\pinlabel $S$ at 52 4
\endlabellist 
\begin{center}
\includegraphics[height = 5cm]{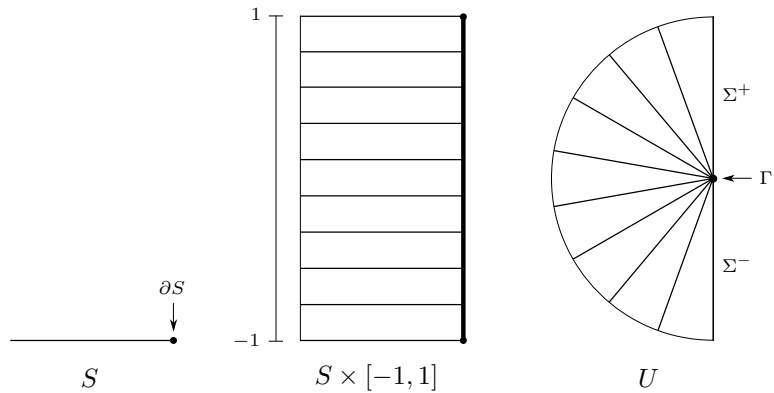}
\caption{\quad The diagram on the left represents the surface $S$. The diagram in the middle represents $S \times [-1,1]$; we have drawn some of the $S \times \{t\}$ fibers. The diagram on the right represents the handlebody $U$ obtained from $S \times [-1,1]$ by collapsing $\partial S \times [-1,1]$ to $\Gamma = \partial S \times \{0\}$.}
\label{fig:handle}
\end{center}
\end{figure}

It is not hard to prove that there exists a unique (up to isotopy) tight contact structure $\xi_0$ on $U$ for which $\Sigma$ is convex with dividing set $\Gamma$ (see \cite{et2}, for example). Let $(U_1,\xi_1)$ and $(U_2,\xi_2)$ be identical copies of $(U,\xi_0)$, with $\partial U_1 = \Sigma = \partial U_2$. 
According to Torisu \cite{tor}, $(M_{S,\phi},\xi_{S,\phi})$ is the contact 3-manifold obtained by gluing $(U_2,\xi_2)$ to $(U_1,\xi_1)$ via the orientation-reversing diffeomorphism $A_{\phi}:\partial U_2\rightarrow \partial U_1$ defined by 
$$A_{\phi}(x) = 
\left\{\begin{array}{lll} 
r(\phi(x)),& x \in \Sigma^+,\\ 
r(x),  & x \in \Sigma^-.
 \end{array} \right.$$
(The orientation on $M_{S,\phi}$ is specified by $M_{S,\phi} = U_1-U_2$.) The fact that $A_{\phi}$ sends $\Gamma \subset \partial U_1$ to $\Gamma\subset \partial U_2$ is what makes it possible to glue these two contact structures together, by Giroux's Flexibility Theorem \cite{giroux}.

Now suppose that $\phi$ is the composition $hg$. Let $I$ be the interval $[-\epsilon,\epsilon]$, and let $\xi_I$ be the $I$-invariant contact contact structure on $\Sigma \times I$ for which each $\Sigma \times \{t\}$ is convex with dividing set $\Gamma \times \{t\}$. Then $(M_{S,hg},\xi_{S,hg})$ may also be obtained by first gluing $(U_2,\xi_2)$ to $(\Sigma \times I, \xi_I)$ by the diffeomorphism from $\partial U_2$ to $\Sigma \times \{\epsilon\}$ which sends $x$ to $(A_g(x),\epsilon)$, and then gluing the resulting contact manifold to $(U_1,\xi_1)$ by the diffeomorphism from $\Sigma \times \{-\epsilon\}$ to $\partial U_1$ which sends $(x,-\epsilon)$ to $A_h( r(x))$. See Figure \ref{fig:handle2} for reference.

\begin{figure}[!htbp]
\labellist 
\hair 2pt 
\tiny
\pinlabel $\Sigma\times I$ at 472 105
\pinlabel $U_1$ at 360 136
\pinlabel $U_2$ at 585 136
\pinlabel $rg$ at 517 79
\pinlabel $r$ at 517 193
\pinlabel $rhr$ at 427 79
\pinlabel $id$ at 427 193
\pinlabel $\Sigma^+$ at 399 193
\pinlabel $\Sigma^+$ at 546 79
\pinlabel $\Sigma^-$ at 399 79
\pinlabel $\Sigma^-$ at 546 193
\pinlabel $U_1$ at 50 136
\pinlabel $U_2$ at 190 136
\pinlabel $r\phi$ at 122 79
\pinlabel $r$ at 122 193
\pinlabel $\Sigma^+$ at 93 193
\pinlabel $\Sigma^+$ at 153 79
\pinlabel $\Sigma^-$ at 93 79
\pinlabel $\Sigma^-$ at 153 193
\small
\pinlabel $-\epsilon$ at 435 258
\pinlabel $\epsilon$ at 502 258
\pinlabel $M_{S,\phi}$ at 122 3
\pinlabel $M_{S,hg}$ at 472 3
\endlabellist 
\begin{center}
\includegraphics[height=5.2cm]{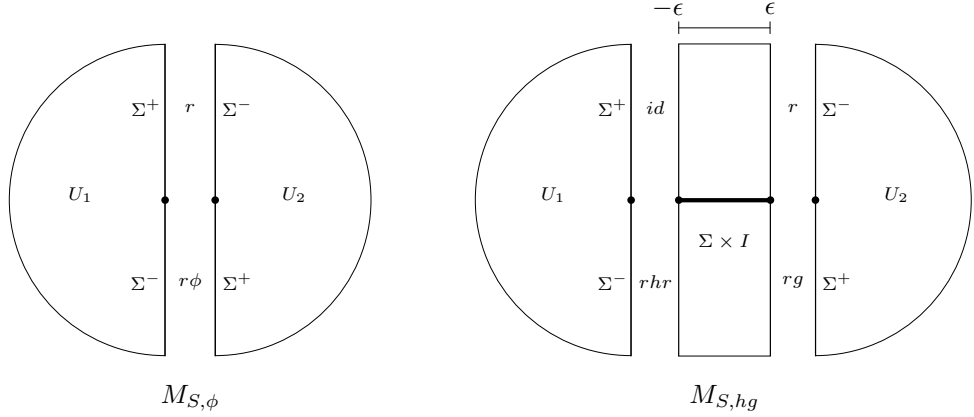}
\caption{\quad The diagram on the left illustrates the process of gluing $U_2$ to $U_1$ to form $M_{S,\phi}$. Alternatively, $M_{S,hg}$ can be formed by gluing $U_2$ to $\Sigma \times I$, and then gluing the result to $U_1$, as shown in the diagram on the right.}
\label{fig:handle2}
\end{center}
\end{figure}

If $S$ has genus $g$ and $r$ boundary components, then $\Sigma$ has genus $n=2g+r-1$. Let $b_1,\dots, b_n$ be disjoint, properly embedded arcs in $S$ for which $S\sminus\cup_i b_i$ is a disk. For $i=1,\dots,n$, we define the curve $\beta_i \subset \Sigma$ by $$\beta_i = b_i \times \{-1\} \cup b_i \times \{1\}.$$ (See Figure \ref{fig:arcs} for an example.) Note that $\beta_i$ bounds the attaching disk $b_i \times [-1,1]\subset U$. In particular, $U$ may be recovered from $\Sigma$ by thickening the surface, attaching 2-handles to one side along the curves $\beta_i$, and then gluing a 3-ball to the $S^2$ boundary component of the resulting manifold.

Let $\LLb$ be the link, contained in the $\Sigma \times I$ portion of $M_{S,hg}$, whose components are the curves $\beta_i \times \{0\} \subset \Sigma \times \{0\}$. The link $\LLb$ is \emph{nonisolating} in the convex surface $\Sigma \times \{0\}$; that is, $\LLb$ is transverse to $\Gamma \times \{0\}$, and the closure of every component of $\Sigma\times \{0\} \sminus (\Gamma \times \{0\} \cup \LLb)$ intersects $\Gamma \times \{0\}$. Therefore, by the Legendrian Realization Principle, we may assume that $\LLb$ is Legendrian \cite{honda2}. Moreover, each $\beta_i \times \{0\}$ intersects the dividing set $\Gamma \times \{0\}$ in exactly two places. It follows that $tw(\beta_i\times\{0\}, \Sigma \times \{0\})$, which measures the contact framing of $\beta_i\times\{0\}$ relative to the framing induced by the surface $\Sigma\times \{0\}$, is $$-\frac{1}{2}\,\#(\beta_i\times\{0\}\, \cap \,\Sigma\times\{0\}) = -1.$$ Therefore, contact $(+1)$-surgery on $\LLb$ is the same as $0$-surgery on $\LLb$ with respect to the framing induced by $\Sigma\times \{0\}$. 

\begin{figure}[!htbp]
\labellist 
\hair 2pt 
\tiny
\pinlabel $b_1$ at 173 230
\pinlabel $b_2$ at 226 230
\pinlabel $\beta_1$ at 536 230
\pinlabel $\beta_2$ at 589 230
\pinlabel $\Gamma$ at 632 287

\small
\pinlabel $\Sigma$ at 500 20
\pinlabel $S$ at 135 20
\endlabellist 
\begin{center}
\includegraphics[height = 5.6cm]{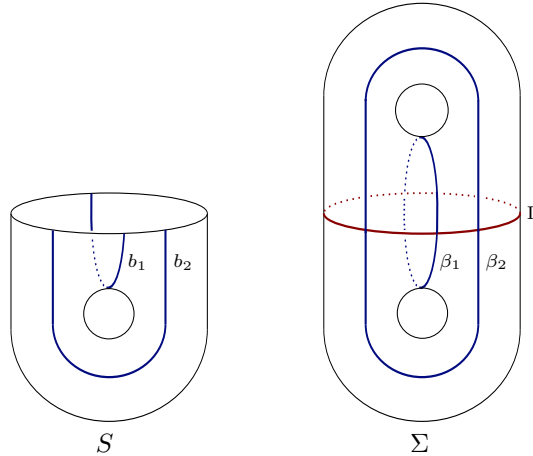}

\caption{\quad In this example, $S$ is a genus one surface with one boundary component. The diagram on the right shows the curves $\beta_1$ and $\beta_2$ in blue, and the dividing set $\Gamma$ in red.}
\label{fig:arcs}
\end{center}
\end{figure}

For any contact manifold $(M,\xi)$, and any Legendrian link $\LL \subset M$, let us denote by $(M_{\LL},\xi_{\LL})$ the contact manifold obtained from $M$ via contact $(+1)$-surgery on $\LL$.

\begin{proposition}
\label{prop:tight}
The contact manifold $((\Sigma\times I)_{\LLb}, (\xi_I)_{\LLb})$ is tight.
\end{proposition}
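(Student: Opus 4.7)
The plan is to exhibit $((\Sigma\times I)_\LLb,(\xi_I)_\LLb)$ as a contact submanifold of a closed contact 3-manifold that we can show is tight via non-vanishing of the Ozsv\'ath--Szab\'o contact invariant. Since any contact submanifold of a tight contact manifold is tight, this will suffice.

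First I would close up $((\Sigma\times I)_\LLb,(\xi_I)_\LLb)$ by attaching the tight contact handlebody $(U_1,\xi_1)$ to $\Sigma\times\{-\epsilon\}$ and $(U_2,\xi_2)$ to $\Sigma\times\{\epsilon\}$ using the Torisu gluing maps corresponding to the identity diffeomorphism $\phi=\mathrm{id}$ (so each gluing is just the reflection $r$). Call the resulting closed contact 3-manifold $(N,\eta)$. In the absence of the $(+1)$-surgeries, this procedure produces $(M_{S,\mathrm{id}},\xi_{S,\mathrm{id}})$, which is Stein fillable since its supporting open book $(S,\mathrm{id})$ has monodromy that is trivially a product of right-handed Dehn twists.

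The main step will be to identify $(N,\eta)$ with the contact connected sum $(M_{S,\mathrm{id}},\xi_{S,\mathrm{id}})\,\#\,(M_{S,\mathrm{id}},\xi_{S,\mathrm{id}})$. After the $(+1)$-surgeries, the convex surface $\Sigma\times\{0\}$ has been compressed along a complete system of surface-framed disks into a convex 2-sphere $\Sigma'\subset (N,\eta)$. Because each $\beta_i$ intersects $\Gamma\times\{0\}$ in exactly two points, the dividing set on $\Sigma'$ is a single circle, so $\Sigma'$ is a contact reducing sphere. Splitting $(N,\eta)$ along $\Sigma'$ realizes it as a contact connected sum, and I would identify each summand with $(M_{S,\mathrm{id}},\xi_{S,\mathrm{id}})$ by noting that the surface framing of $\beta_i$ in $\Sigma$ agrees with the framing from its compressing disk $b_i\times[-1,1]\subset U$; thus each half of $(N,\eta)$, together with the attached $(U_i,\xi_i)$, undoes the surgery on that side and recovers the Torisu description of $M_{S,\mathrm{id}}$.

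Given this identification, I would invoke the connected-sum formula $c(\xi_1\#\xi_2) = c(\xi_1)\otimes c(\xi_2)$ and the nonvanishing of $c(\xi_{S,\mathrm{id}})$ (which follows from Stein fillability) to conclude $c(N,\eta)\neq 0$, hence $(N,\eta)$ is tight. Tightness of $((\Sigma\times I)_\LLb,(\xi_I)_\LLb)$ follows immediately. The hard part will be the identification step: I will need to argue carefully, via the Giroux flexibility theorem and by tracking the dividing curves under the surgery compressions, that after cutting $(N,\eta)$ along $\Sigma'$ and capping each $S^2$ boundary component with a Darboux ball, each resulting closed contact manifold is contactomorphic---not merely diffeomorphic---to $(M_{S,\mathrm{id}},\xi_{S,\mathrm{id}})$.
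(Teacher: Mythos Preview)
Your overall strategy---embed $((\Sigma\times I)_{\LLb},(\xi_I)_{\LLb})$ into the closed manifold obtained by capping off with the identity monodromy, and then show that this closed contact manifold has nonvanishing Ozsv\'ath--Szab\'o invariant---is exactly the paper's strategy. The difficulty is in your ``main step.'' Identifying $(N,\eta)$ \emph{as a contact manifold} with $(M_{S,\mathrm{id}},\xi_{S,\mathrm{id}})\#(M_{S,\mathrm{id}},\xi_{S,\mathrm{id}})$ is precisely the content of Proposition~\ref{prop:connectsum} (specialized to $h=g=\mathrm{id}$), and the paper's proof of that proposition \emph{uses} Proposition~\ref{prop:tight}: one needs tightness of $(\xi_I)_{\LLb}$ to know that the convex $S^2$ separating the two halves has connected dividing set, and then one invokes the uniqueness of the tight contact structure on a handlebody with prescribed convex boundary (via Honda's gluing theorem) to identify each half with $(U,\xi_0)$. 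Giroux flexibility and tracking dividing curves alone will not pin down the contact structure on each half-handlebody, since there are overtwisted structures with the same convex boundary data; you would be assuming what you want to prove.

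The paper sidesteps this circularity by never identifying the contact structure on $(N,\eta)$. Instead it identifies only the underlying smooth manifold: each $\beta_i\times\{0\}$ bounds a disk $b_i\times[-1,1]\cup\beta_i\times[-\epsilon,0]$ in $M_{S,\mathrm{id}}$ whose framing agrees with the surface framing, so $(M_{S,\mathrm{id}})_{\LLb}$ is $0$-surgery on an $n$-component unlink in $\#^n(S^1\times S^2)$, i.e.\ $\#^{2n}(S^1\times S^2)$. The $2$-handle cobordism $W$ then has the property that $F_{-W}:\hf(-M_{S,\mathrm{id}})\to\hf(-(M_{S,\mathrm{id}})_{\LLb})$ is injective (this is an algebraic fact about such $0$-framed unknot handle attachments), and since it sends $c(\xi_{S,\mathrm{id}})\neq 0$ to $c((\xi_{S,\mathrm{id}})_{\LLb})$, the latter is nonzero. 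This gives tightness without ever needing the contactomorphism you are after.
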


\begin{proof}
By construction, $((\Sigma \times I)_{\LLb}, (\xi_I)_{\LLb})$ embeds into $((M_{S,hg})_{\LLb},(\xi_{S,hg})_{\LLb})$ for any $h$ and $g$. So, it is enough to find an $h$ and $g$ for which the latter is tight. Let $h$ and $g$ each be the identity. In this case, $M_{S,hg}  = M_{S,id} \cong \#^{n}(S^1 \times S^2)$, and each component $\beta_i \times \{0\}$ of $\LLb$ bounds the disk $$b_i \times [-1,1]\cup \beta_i \times [-\epsilon,0] \,\subset\, U_1 \cup \Sigma \times [-\epsilon,0].$$ Moreover, the framings induced by these disks agree with the framings induced by $\Sigma \times \{0\}$. So, topologically, $(M_{S,id})_{\LLb}$ is the result of $0$-surgery on an $n$ component unlink in $\#^{n}(S^1 \times S^2)$; that is, $(M_{S,id})_{\LLb} \cong \#^{2n}(S^1 \times S^2)$. If $W$ is the cobordism from $M_{S,id}$ to $(M_{S,id})_{\LLb}$ obtained by attaching $0$-framed 2-handles to the unknots $\beta_i \times \{0\}$, then it follows that the map $$F_{-W}: \hf(-M_{S,id}) \rightarrow \hf(-(M_{S,id})_{\LLb})$$ is injective \cite[Proposition 6.1]{osz12}. By \cite{osz1}, this map sends $c(\xi_{S,id})$ to $c((\xi_{S,id})_{\LLb})$. The contact invariant $c(\xi_{S,id})$ is non-zero since $\xi_{S,id}$ is Stein fillable; hence, $c((\xi_{S,id})_{\LLb})$ is non-zero as well, by the injectivity of $F_{-W}$. Thus, $(\xi_{S,id})_{\LLb}$ is tight, and so is $(\xi_I)_{\LLb}$.
\end{proof}

\begin{proposition}
\label{prop:connectsum}
The contact manifold $((M_{S,hg})_{\LLb},(\xi_{S,hg})_{\LLb})$ is the contact connected sum $(M_{S,h},\xi_{S,h})\,\#\,(M_{S,g},\xi_{S,g})$.
\end{proposition}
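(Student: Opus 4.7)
The plan is to find a convex $2$-sphere $\Sigma_0 \subset ((M_{S,hg})_{\LLb},(\xi_{S,hg})_{\LLb})$, carrying a single dividing circle, that separates the manifold into two submanifolds contactomorphic to $(M_{S,h},\xi_{S,h})\setminus B^3$ and $(M_{S,g},\xi_{S,g})\setminus B^3$; by the defining property of the contact connected sum, this yields the proposition.

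First I would construct $\Sigma_0$ as a compression of the convex surface $\Sigma\times\{0\}$ under the surgery. Since each $\beta_i\times\{0\}$ meets $\Gamma\times\{0\}$ in exactly two points, the contact $(+1)$-surgery on $\LLb$ is the same as the surface-framed $0$-surgery, whose topological effect is to cap off each $\beta_i\times\{0\}$ by a pair of parallel disks in the new surgery solid torus (one disk for each of the two boundary components of the annulus $(\Sigma\times\{0\}) \cap \nu(\beta_i\times\{0\})$). Assembling $\Sigma_0$ from $(\Sigma\times\{0\}) \setminus \nu(\LLb)$ together with these $2n$ new disks, $\Sigma_0$ is diffeomorphic to $\Sigma$ cut along the $\beta_i$'s and capped, which is a $2$-sphere since the arcs $b_i$ cut $S$ into a disk. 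To verify that the new dividing set on $\Sigma_0$ is a single circle, I would observe that the positive region $\Sigma^+ \cong S$ is cut by the arcs $\beta_i \cap \Sigma^+ = b_i\times\{1\}$ into a disk; attaching the positive halves of the $2n$ cap disks (determined by the dividing arc on each cap) keeps it a disk, and symmetrically for $\Sigma^-$. Hence $\Sigma_0 \cong S^2$ is cut by the new dividing set into two disks, forcing the dividing set to be a single equatorial circle.

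Next I would identify the two pieces cut off by $\Sigma_0$. The upper side of $\Sigma_0$ inside $(\Sigma\times I)_{\LLb}$ is obtained from $\Sigma\times[0,\epsilon]$ by attaching $n$ surface-framed $2$-handles along the $\beta_i\times\{0\}$'s; since the $\beta_i$'s bound a full system of compressing disks of the handlebody $U$, this piece is diffeomorphic to $U\setminus B^3$, with boundary $\Sigma \sqcup S^2$. Glued to $U_2$ via $x\mapsto(A_g(x),\epsilon)$, the combined piece is topologically $M_{S,g}\setminus B^3$. Its contact structure is tight by Proposition \ref{prop:tight}; its convex boundary consists of $\Sigma$ with dividing set $\Gamma$ and $\Sigma_0$ with a single dividing circle, so by the uniqueness of tight contact structures on $(U,\xi_0)$ used in Torisu's characterization, the piece is contactomorphic to $(M_{S,g},\xi_{S,g})\setminus B^3$. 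The lower side, joined to $U_1$, is identified symmetrically with $(M_{S,h},\xi_{S,h})\setminus B^3$, and gluing the two pieces along the standard convex $\Sigma_0$ is the definition of the contact connected sum. The main technical obstacle will be the dividing-set bookkeeping on $\Sigma_0$; the hemisphere-disk argument above is what makes this combinatorial check tractable.
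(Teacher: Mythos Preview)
Your approach is essentially the paper's: locate a separating $S^2$ inside $(\Sigma\times I)_{\LLb}$, identify each side topologically as a copy of $U$ minus a ball, invoke Proposition~\ref{prop:tight}, and then use the uniqueness of the tight structure on $U$ to pin down the contact structures on the two pieces.

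The paper streamlines two of your steps. First, instead of tracking the dividing set on $\Sigma_0$ through the surgery, it simply perturbs the $S^2$ to be convex (Giroux) and lets tightness, already established in Proposition~\ref{prop:tight}, force the dividing set to be a single circle; your hemisphere-disk bookkeeping is correct in spirit, but making it rigorous requires analyzing the convex meridian disks in the surgered solid torus, work you can avoid entirely. Second, the passage from ``tight on $U\setminus B^3$ with the given convex boundary data'' to ``contactomorphic to $(U,\xi_0)$ minus a Darboux ball'' is not literally the uniqueness statement for $(U,\xi_0)$: one must first fill the $S^2$ boundary with a tight ball and know the result remains tight. The paper makes this explicit by citing Honda's Gluing Theorem \cite[Theorem~2.5]{honda3}; you should do the same.
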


\begin{proof}[Proof of Proposition \ref{prop:connectsum}] Let $N_i \subset \Sigma \times I$ be a tubular neighborhood of $\beta_i\times\{0\}$ such that $\partial N_i$ is the union of two annuli, $A^1_i \subset \Sigma \times [-\epsilon, 0]$ and $A^2_i \subset \Sigma \times [0, \epsilon]$. And let us think of $S^1$ as the union of two intervals, $S^1=I_1\cup I_2$. Topologically, $(\Sigma \times I)_{\LLb}$ is obtained from $\Sigma \times I$ by performing 0-surgery on $\LLb$ with respect to the framing induced by $\Sigma \times\{0\}$, as discussed above. $(\Sigma \times I)_{\LLb}$ is therefore the result of gluing solid tori $D^2_i\times S^1$ to $\Sigma \times I \sminus \cup_{i}\, \text{int}\,N_i$ so that $\partial D^2_i \times I_1$ is glued to $A^1_i$, and $\partial D^2_i \times I_2$ is glued to $A^2_i$ (see Figure \ref{fig:handle3}). So, $(\Sigma \times I)_{\LLb}$ is the union \begin{equation}\label{eqn:connect} (\Sigma \times [-\epsilon,0] \sminus \cup_{i}\, \text{int}\,N_i) \cup_i (D^2_i \times I_1)\,\, \bigcup \,\,(\Sigma \times [0,\epsilon] \sminus \cup_{i}\, \text{int}\,N_i) \cup_i (D^2_i \times I_2).\end{equation} Each of these two pieces is homeomorphic to the manifold obtained by thickening $\Sigma$ and attaching 2-handles to one side of this thickened surface along the curves $\beta_i$; in other words, each piece is the complement of a 3-ball in a genus $n$ handlebody, and these pieces are attached along their common $S^2$ boundary component. 

\begin{figure}[!htbp]
\labellist 
\hair 2pt 
\tiny
\pinlabel $\beta_i\times \{0\}$ at -15 134
\pinlabel $0$ at 60 259
\pinlabel $\Gamma\times \{0\}$ at 99 117
\pinlabel $A^1_i$ at 285 134
\pinlabel $D^2_i\times I_1$ at 381 46
\pinlabel $A^2_i$ at 567 134
\pinlabel $D^2_i\times I_2$ at 471 46
\small
\pinlabel $\Sigma\times I$ at 60 4
\pinlabel $-\epsilon$ at 16 260

\pinlabel $\epsilon$ at 98 260
\endlabellist 
\begin{center}
\includegraphics[height = 5.7cm]{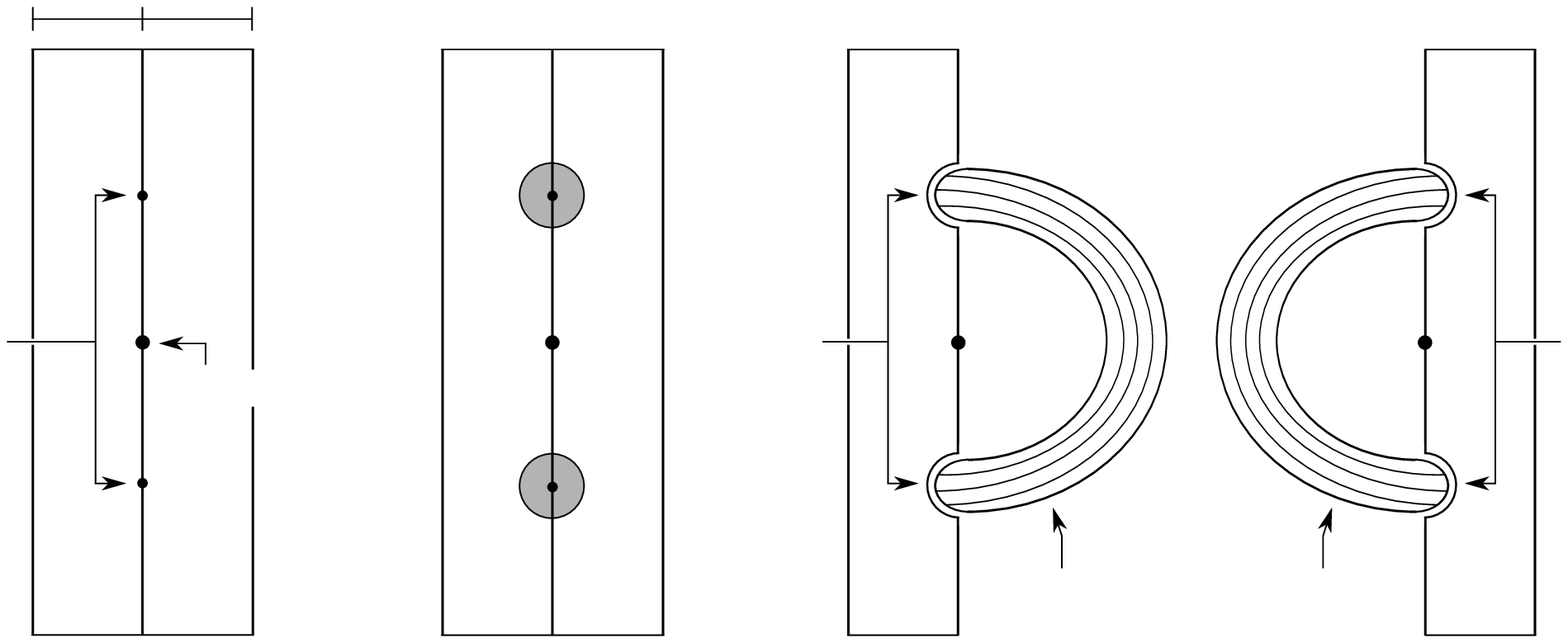}
\caption{\quad The diagram on the left shows the knot $\beta_i \times \{0\}\subset \Sigma \times \{0\}$. The shaded disks in the middle diagram represent the tubular neighborhood $N_i$. The diagram on the right illustrates the process of performing $0$-surgery on $\beta_i \times \{0\}$ by removing $N_i$ and gluing 2-handles along the annuli $A^1_i$ and $A^2_i$. We have drawn some of the $D^2 \times \{t\}$ fibers in these 2-handles.}
\label{fig:handle3}
\end{center}
\end{figure}

Let us denote the left and right pieces in (\ref{eqn:connect}) by $U_3 - B^3$ and $U_4 - B^3$, respectively, where $U_3$ and $U_4$ are genus $n$ handlebodies with $\partial U_3 = -\Sigma \times \{-\epsilon\}$ and $\partial U_4 = \Sigma \times \{\epsilon\}$. According to \cite{giroux}, their common $S^2$ boundary component can be made convex in $((\Sigma\times I)_{\LLb}, (\xi_I)_{\LLb})$ after a small isotopy. By Proposition \ref{prop:tight}, the restriction of $(\xi_I)_{\LLb}$ to $U_i-B^3$ is tight, for $i=3,4$. Therefore, by Honda's Gluing Theorem \cite[Theorem 2.5]{honda3}, the restriction $(\xi_I)_{\LLb}|_{U_i-B^3}$ is isotopic to the contact structure on the complement of a Darboux ball in $(U_i,\xi_i),$ where $\xi_i$ is the unique tight contact structure on $U_i$ for which $\partial U_i$ is convex with dividing set $\Gamma \times \{-\epsilon\}$ when $i=3$, and $\Gamma \times \{\epsilon\}$ when $i=4$. Said differently, $((\Sigma\times I)_{\LLb}, (\xi_I)_{\LLb})$ is the contact connected sum of identical copies, $(U_3,\xi_3)$ and $(U_4,\xi_4)$, of the contact handlebody $(U,\xi_0)$.

As a result, $((M_{S,hg})_{\LLb},(\xi_{S,hg})_{\LLb})$ may be pieced together as follows. First, glue $(U_2,\xi_2)$ to $(U_4,\xi_4)$ by the diffeomorphism from $\partial U_2$ to $\partial U_4 = \Sigma \times \{\epsilon\}$ which sends $x$ to $(A_g(x),\epsilon)$; this forms $(M_{S,g},\xi_{S,g})$. Next, glue $(U_3,\xi_3)$ to $(U_1,\xi_1)$ by the diffeomorphism from $-\partial U_3 = \Sigma \times \{-\epsilon\}$ to $\partial U_1$ which sends $(x,-\epsilon)$ to $A_h(r(x))$; this forms $(M_{S,h},\xi_{S,h})$. Finally, remove Darboux balls from the $U_3$ and $U_4$ portions of $M_{S,h}$ and $M_{S,g}$, and glue the resulting contact manifolds together by a diffeomorphism which identifies the dividing curves on their $S^2$ boundary components. This process realizes $((M_{S,hg})_{\LLb},(\xi_{S,hg})_{\LLb})$ as the contact connected sum of $(M_{S,h},\xi_{S,h})$ with $(M_{S,g},\xi_{S,g})$.
\end{proof}

\begin{proof}[Proof of Theorem \ref{thm:Stein}]
According to Ding and Geiges \cite[Proposition 8]{DG}, Proposition \ref{prop:connectsum} implies that $(M_{S,hg},\xi_{S,hg})$ is the result of contact $(-1)$-surgery on a link in the contact connected sum $(M_{S,h},\xi_{S,h})\,\#\,(M_{S,g},\xi_{S,g})$.
\end{proof}

\section{Fillability of cyclic branched covers}

The essential idea in the proof of Theorem \ref{thm:Stein} is that we can find curves on the convex surface $\Sigma\times\{0\}\subset M_{S,hg}$ which each intersect the dividing set twice and which are attaching curves for the handlebody $S \times [-1,1]$. These conditions guarantee that contact $(+1)$-surgery on these curves is the same as $0$-surgery with respect to the framing induced by $\Sigma$, and, therefore, that such surgery results in the appropriate connected sum. This idea can be applied more generally to prove results like Theorem \ref{thm:bnc}, as below.

\begin{proof}[Proof of Theorem \ref{thm:bnc}]
Suppose that $K$ is a transverse knot in a tight contact manifold $(M,\xi)$. If $S$ is a Seifert surface for $K$ for which $sl(K) =- \chi(S)$, then $S$ may be perturbed to be convex with dividing set $\Gamma$ disjoint from $\partial S$ \cite{bev}. This convex $S$ has an $I$-invariant neighborhood $N=S\times [-1,1]$ whose convex boundary, after rounding corners, is $\Sigma = DS$, the double of $S$. The dividing set on $\Sigma$ is given by $$\mathbf{\Gamma}=\Gamma\,\cup\,\bar{\Gamma}\,\cup\,C,$$ where $\Gamma$ is the dividing set of $S$ as it sits on $S\times\{1\}$, $\bar{\Gamma}$ is the dividing set of $S$ as it sits on $S\times\{-1\}$, and $C=\partial S\times\{1/2\}$ is a curve isotopic to $K$ (see the proof of \cite[Lemma 2.1]{bev}).

As before, let $b_1,\dots, b_n$ be disjoint, properly embedded arcs in $S$ for which $S\sminus\cup_i b_i$ is a disk, and define curves $\beta_i = \partial (b_i \times [-1,1])$ on $\Sigma$. Now, it is not necessarily true that each $\beta_i$ intersects $\mathbf{\Gamma}$ twice. To remedy this, we let $p_1,\dots,p_k$ denote the points of intersection between the $b_i$ and $\Gamma$, and consider instead the complementary handlebody $$N'=N\sminus \cup_j\, (\nu(p_j)\times[-1,1]),$$ where each $\nu(p_j)\times [-1,1]$ is a standard neighborhood of the Legendrian arc $p_j\times[-1,1]$ in $N$. The boundary $\Sigma'=\partial N'$ is obtained from $\Sigma$ by attaching $k$ tubes from $S\times\{1\}$ to $S\times\{-1\}$ corresponding to the $p_j$. Removing the $\nu(p_j)$ from $S$ cuts each $b_i$ into properly embedded arcs $b_{i,1},\dots,b_{i,n_i}$ in $S\sminus \cup_j \, \nu(p_j)$. Then the $\beta_{i,l} = \partial b_{i,l}\times[-1,1]$ are attaching curves for the handlebody $N'$ and can each be made to intersect the new dividing curves $\mathbf{\Gamma'}\subset \Sigma'$ twice (see Figure \ref{fig:complement}).

\begin{figure}[!htbp]
\labellist 
\hair 2pt 
\tiny
\pinlabel $-1$ at 415 18
\pinlabel $1$ at 415 121
\pinlabel $S\times\{1\}$ at 33 195
\pinlabel $S\times\{-1\}$ at 33 33
\pinlabel $\mathbf{\Gamma'}$ at 190 183
\pinlabel $\beta_{i,l}$ at 190 155
\pinlabel $\beta_{i,l-1}$ at 94 155
\pinlabel $\beta_{i,l+1}$ at 284 155

\endlabellist 

\begin{center}
\includegraphics[height = 4.2cm]{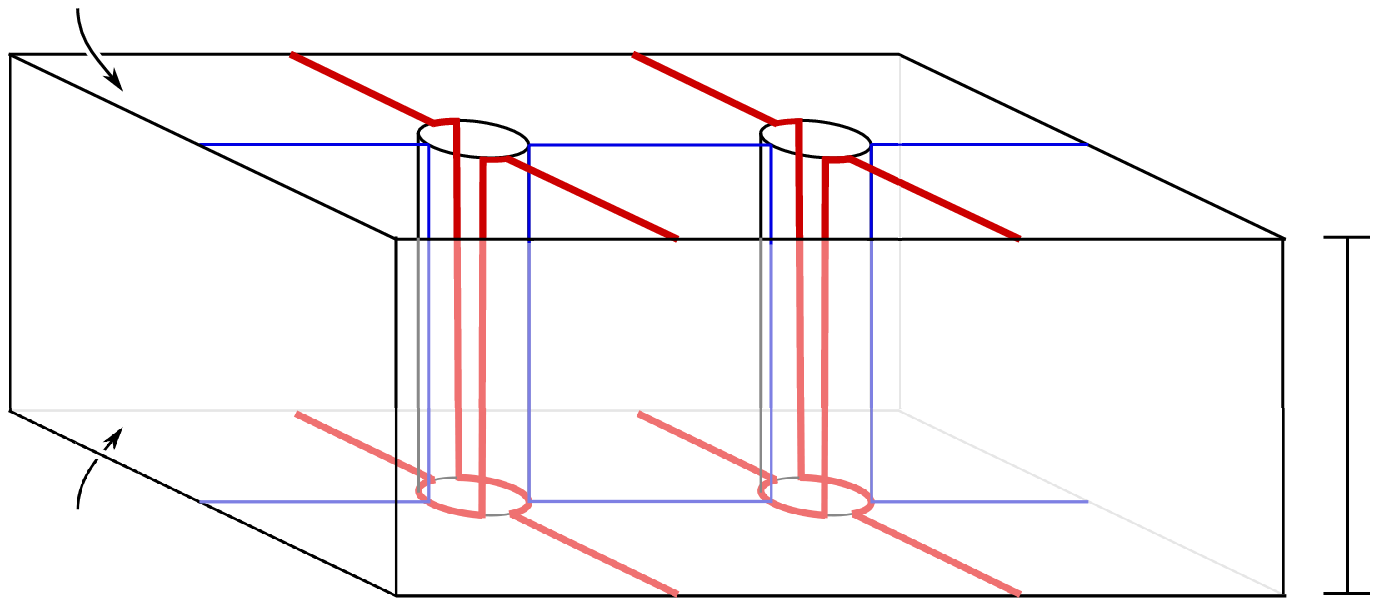}
\caption{\quad A portion of the handlebody $N'$, obtained by boring tunnels out of $N$. The dividing curve $\mathbf{\Gamma'}$ is in red. }
\label{fig:complement}
\end{center}
\end{figure}

Note that the contact manifold $(\Sigma^2(M,K),\xi^2(M,K))$ obtained by taking the double cover of $(M,\xi)$ branched along $K$ is formed by gluing together two copies of $M\setminus N$ along their boundaries (which are copies of $\Sigma$) so that the dividing curves match up. Therefore, by gluing together two copies of $M\setminus N'$ along their boundaries (which are copies of $\Sigma'$), one obtains a contact connected sum \begin{equation}\label{eqn:cs}(\Sigma^2(M,K),\xi^2(M,K))\,\#\,(\#^k(S^1\times S^2, \xi_0))\end{equation} for some contact structure $\xi_0$ on $\#^k(S^1\times S^2)$. Let $\beta_{i,l}'$ denote the copy of $\beta_{i,l}$ in this glued manifold, and let $$\mathbb{L} =\cup_{i=1}^n\cup_{l=1}^{n_i}\,\beta_{i,l}'.$$ 

Then contact $(+1)$-surgery on $\mathbb{L}$ produces the contact connected sum $(M,\xi)\,\#\,(M,\xi)$, as before. In other words, the manifold in (\ref{eqn:cs}) is obtained from $(M,\xi)\,\#\,(M,\xi)$ via $l$ Stein 2-handle additions. So, if $(M,\xi)$ is at least weakly fillable, then the same is true of the manifold in (\ref{eqn:cs}); in this case, $\xi_0$ must be the unique Stein fillable contact structure on $\#^k(S^1\times S^2)$.
Moreover, $(\Sigma^2(M,K),\xi^2(M,K))$ is obtained from the manifold in (\ref{eqn:cs}) by $k$ Stein 2-handle additions. So, in the end, $(\Sigma^2(M,K),\xi^2(M,K))$ is obtained from $(M,\xi)\,\#\,(M,\xi)$ via $(k+l)$ Stein 2-handle additions. As a result, we find that as long as $(M,\xi)$ is Stein (resp. strongly/weakly symplectically) fillable, then so is $(\Sigma^2(M,K),\xi^2(M,K))$. We can apply a similar construction for the lift of $K$ in $\Sigma^2(M,K)$ to conclude the analogous result for $(\Sigma^3(M,K),\xi^3(M,K))$, the 3-fold cyclic cover of $(M,\xi)$ branched along $K$, and so on.
\end{proof}

Theorem \ref{thm:bnc} ultimately rests on the fact that $K$ may be ``protected" from the dividing curves on $S$ whenever $sl(K) = -\chi(S)$  \cite{bev2}. That is, $S$ is isotopic to a convex surface with for which there is a component $C$ of the dividing set such that $C$ and $K$ cobound an annulus with characteristic foliation consisting of arcs from $C$ to $K$. This is what allows us to conclude that the dividing set on $\Sigma = DS$ is of the form $\Gamma\,\cup\,\bar{\Gamma}\,\cup\,C.$

It is an interesting problem to find a more general criterion which ensures that $K$ is the protected boundary of some Seifert surface. Van Horn-Morris and I hope to return to this problem in a future paper. For now, note that the proof of Theorem \ref{thm:bnc} provides an obstruction, as illustrated below.

\begin{proposition}
\label{prop:protect}
Let $B$ be the transverse 3-braid in $(S^3,\xi_{std})$ with braid word given by $(\sigma_1\sigma_2)^3\sigma_1\sigma_2^{-a_1}\dots\sigma_1\sigma_2^{-a_m},$ where the $a_i\geq 0$ and some $a_j\neq 0$. Then $B$ is not the protected boundary of any Seifert surface when $4+m-\sum a_i <0$. 
\end{proposition}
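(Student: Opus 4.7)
The plan is to argue by contradiction, leveraging the construction in the proof of Theorem \ref{thm:bnc}. Suppose $B$ is the protected boundary of some Seifert surface in $(S^3,\xi_{std})$. The argument in the proof of Theorem \ref{thm:bnc} then realizes $(\Sigma^2(S^3,B),\xi^2(S^3,B))$ as the outgoing end of a sequence of Stein $2$-handle attachments starting at the contact connected sum $(S^3,\xi_{std})\#(S^3,\xi_{std})\cong(S^3,\xi_{std})$. Capping the incoming end with the trivial Stein filling $B^4$ of $(S^3,\xi_{std})$ produces a Stein filling of $(\Sigma^2(S^3,B),\xi^2(S^3,B))$, so in particular the Ozsv\'ath--Szab\'o contact invariant $c(\xi^2(S^3,B))$ is non-zero.

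Next I would identify $(\Sigma^2(S^3,B),\xi^2(S^3,B))$ by an explicit open book. The standard disk open book of $(S^3,\xi_{std})$, with the braid $B$ sitting as a transverse closed braid on its pages, lifts under the double cover branched along $B$ to an open book supporting $\xi^2(S^3,B)$ whose page is the double cover of a disk branched at three points -- the once-punctured torus $\Sigma_{1,1}$ -- and whose monodromy is the Birman--Hilden lift of $B\in B_3$ to $\text{Mod}(\Sigma_{1,1},\partial)$. Under this lift $\sigma_1\mapsto T_x$ and $\sigma_2\mapsto T_y$, where $T_x,T_y$ are right-handed Dehn twists about a standard pair of simple closed curves $x,y\subset\Sigma_{1,1}$ with $|x\cap y|=1$, and so the monodromy is
\[
\psi \;=\; (T_xT_y)^3\cdot T_xT_y^{-a_1}\cdot T_xT_y^{-a_2}\cdots T_xT_y^{-a_m},
\]
whose signed Dehn twist count is exactly $6+m-\sum a_i$.

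The remaining step is to show that under the hypothesis $4+m-\sum a_i<0$ the open book $(\Sigma_{1,1},\psi)$ cannot support a Stein fillable, or indeed any tight, contact structure, contradicting the first step. Working in the central extension $\text{Mod}(\Sigma_{1,1},\partial)\twoheadrightarrow SL_2(\mathbb{Z})$ with $T_x\mapsto\bigl(\begin{smallmatrix}1&1\\0&1\end{smallmatrix}\bigr)$ and $T_y\mapsto\bigl(\begin{smallmatrix}1&0\\-1&1\end{smallmatrix}\bigr)$, the element $\psi$ projects to $-M$, where $M=\prod_{i=1}^m\bigl(\begin{smallmatrix}1+a_i&1\\a_i&1\end{smallmatrix}\bigr)$ is a positive hyperbolic matrix. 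The main obstacle is to pin down the precise non-fillability criterion on $\psi$ that matches the numerical threshold $4+m-\sum a_i<0$: either (i) by showing that the fractional Dehn twist coefficient of $\psi$ becomes non-positive, producing a properly embedded arc in $\Sigma_{1,1}$ on which $\psi$ veers left and so forcing $\xi^2(S^3,B)$ to be overtwisted via Honda--Kazez--Mati\'c \cite{hkm1}, or (ii) by directly verifying that the contact invariant of $(\Sigma_{1,1},\psi)$ vanishes, using the classification of contact invariants for genus-one, one-boundary-component open books. The continued-fraction-like structure of the word $\prod_i T_xT_y^{-a_i}$ suggests that either route can be carried out by induction on $m$, tracking the rotation number of the action of $\psi$ on the circle at infinity of $\mathbb{H}^2$.
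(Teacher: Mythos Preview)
Your overall strategy matches the paper's: assume $B$ is protected, invoke the construction from Theorem~\ref{thm:bnc} to conclude that $(\Sigma^2(S^3,B),\xi^2(S^3,B))$ is Stein fillable, and then derive a contradiction from the numerical hypothesis. The paper's own proof stops there and simply cites \cite{bald3} for the fact that $(\Sigma^2(S^3,B),\xi^2(S^3,B))$ is not Stein fillable when $4+m-\sum a_i<0$; you instead attempt to reprove that fact from scratch via the Birman--Hilden open book, which is a reasonable and more self-contained goal.

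However, your third paragraph contains a genuine gap. Route (i) cannot work: the contact structures $\xi^n(S^3,B)$ are in fact \emph{tight} for all $n$ (see the Remark immediately following the proposition, citing \cite{bald1}), so $\psi$ is right-veering and no left-veering arc exists. The whole point of this example is that these contact structures are tight yet not Stein fillable, so any argument via overtwistedness or non-positive fractional Dehn twist coefficient is doomed. Route (ii) is the correct one --- and is essentially what \cite{bald3} carries out using the explicit computation of contact invariants for genus-one, one-boundary-component open books --- but you have only gestured at it. The ``continued-fraction-like'' induction you propose is on the right track, but turning it into an actual vanishing statement for $c(\xi_{\Sigma_{1,1},\psi})$ requires the machinery of \cite{bald3} (or an equivalent computation), not just the $SL_2(\mathbb{Z})$ image or a rotation-number count. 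As written, the proof is incomplete at exactly the step the paper outsources to a citation.
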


\begin{proof}[Proof of Proposition \ref{prop:protect}]
From the proof of Theorem \ref{thm:bnc}, it is enough to observe that the branched double cover $(\Sigma^2(S^3,B), \xi^2(S^3,B))$ is not Stein fillable when $4+m-\sum a_i <0$. This fact appears in \cite{bald3}.
\end{proof}

\begin{remark} One should contrast Proposition \ref{prop:protect} with the fact that $(\Sigma^n(S^3,B), \xi^n(S^3,B))$ is tight for all $n$ \cite{bald1}. 
\end{remark}

\bibliographystyle{hplain.bst}
\bibliography{References}

\end{document}